\pgfplotsset{compat=1.15}
\date{}
\theoremstyle{definition}
\newtheorem{df}{Definition} [section]
\theoremstyle{plain}
\newtheorem{lemma}[df]{Lemma}
\newtheorem{problem}[df]{Problem}
\newtheorem{condition}[df]{Condition}
\definecolor{ffqqqq}{rgb}{1.,0.,0.}
\definecolor{qqwuqq}{rgb}{0.,0.39215686274509803,0.}
\definecolor{qqqqff}{rgb}{0.,0.,1.}
\definecolor{ffffqq}{rgb}{1.,1.,0.}
\definecolor{ffqqqq}{rgb}{1.,0.,0.}
\definecolor{ffffqq}{rgb}{1.,1.,0.}
\DeclareMathOperator{\arccosh}{arccosh}
\title{A $5$-chromatic same-distance graph in the hyperbolic plane}
\author{%
Geoffrey Exoo\\
Department of Mathematics and Computer Science\\
Indiana State University\\
Terre Haute, IN 47809,
\texttt{ge@cs.indstate.edu}
\and
Dan Ismailescu\\
Mathematics Department\\
Hofstra University\\
Hempstead, NY 11549,
\texttt{dan.p.ismailescu@hofstra.edu}
}
\begin{document}

\maketitle
\thispagestyle{empty}
\pagestyle{empty}

\begin{abstract}
The \emph{chromatic number of the plane} problem asks for the minimum number of colors so that each point of the plane can be assigned a single color with the property that no two points unit-distance apart are identically colored.
It is now  known that the answer is $5$, $6$, or $7$.
Here we consider the problem in the context of the hyperbolic plane.
We prove that there exists a distance $d\approx 1.375033509$ so that every
$4$-coloring of the hyperbolic plane contains two points distance $d$ apart,
which are identically colored.
\end{abstract}

\section{\bf Introduction}
$\phantom{assa}$In 1950, Edward Nelson raised the problem of determining the minimum number of colors that are needed to color the points of the Euclidean plane so that no two points unit distance apart are assigned the same color. This number is referred to as the {\it chromatic number of the plane}, and is denoted by $\chi(\mathbb{R}^2)$. The bounds $4\le \chi(\mathbb{R}^2)\le 7$ were proved shortly afterwards by Nelson and Isbell (cf. \cite {soifer}, p. 22--31).

There was no progress on the problem until 2018 when de Grey \cite{degrey}, and independently, the authors of this note \cite{exooismailescuchi5} were able to construct the first $5$-chromatic unit-distance graphs.
A flurry of activity ensued, pursuing two main goals. On one hand, progressively smaller 5-chromatic unit-distance graphs were found by Heule \cite{heule} and subsequently by Parts \cite{parts1}; the current record is a graph of order $509$.

On the other hand, since all these constructions rely on computer assistance it may be desirable to develop techniques that would make a traditional verification possible. A first contribution in this direction is due to Parts \cite{parts2}.
A well documented and entertaining history of the problem is presented by Soifer in his monograph \cite{soifer}. Several variations of the original problem have been recently considered. We direct the interested reader to the Polymath16 research threads \cite{polymath16} for the most current developments.

In 2012, Kahle \cite{kahle} raised the analogous question if one replaces the Euclidean plane by the hyperbolic plane. While the choice of distance for the Euclidean case is not important due to the existence of homotheties. In the hyperbolic setting, however, the chromatic number will depend on the choice of the distance $d$.

For a given $d>0$, let $\chi(\mathbb{H}^2,d)$ denote the minimum number of colors needed to color the hyperbolic plane, $\mathbb{H}^2$, so that no two points distance $d$ apart are identically colored. We refer to this quantity as the chromatic number of the hyperbolic plane with distance $d$.

There are only a few papers in the literature dedicated studying this quantity. Kloeckner \cite{kloeckner} proved that $\chi(\mathbb{H}^2,d)\le 12$ for all $d\le 2\ln(3/2)\approx 0.8109$, and that $\chi(\mathbb{H}^2,d)\le 4\lceil d/\ln{3}\rceil+4$ for all $d\ge 3\ln(3)\approx 3.2958$. These results were improved by Parlier and Petit \cite{PP} who proved the following upper bounds.
\begin{align}\label{parlierpetit}
\chi(\mathbb{H}^2,d)\le
\begin{cases}
9, \quad &\text{if} \,\, d\le 2\ln{2} \approx 1.3863,\\
12, \quad &\text{if} \,\, d\le 2\ln{3} \approx 2.1972,\\
5\lceil d/\ln{4}\rceil+5, \quad &\text{if} \,\, d\ge 2\ln{3}.
\end{cases}
\end{align}

Both papers mentioned above employ the general strategy of using a hyperbolic checkerboard coloring, a
method attributed to Sz\'ekely \cite{szekely}.
The only result involving lower bounds is due to DeCorte and Golubev \cite{DG}, who very recently proved that the \emph{measurable} chromatic number of the hyperbolic plane is $\ge 6$ for all sufficiently large $d$. (apparently, $d\ge 12$ is a sufficient condition).

Despite all this work, the following question of Kahle \cite{kahle} remained open until now.
\begin{problem}
Do we have $\chi(\mathbb{H}^2, d) \ge 5$ for at least one explicit $d$?
\end{problem}
The purpose of this paper is to identify one such specific $d$.
We will be using the Poincar\'e disk model of the hyperbolic plane. In this case, the hyperbolic
distance formula for $\mathbb{H}^2=\{(x,y)\,|\, x^2 +y^2<1\}$ can be expressed as
\begin{equation}\label{hyperbolicdistance}
d_{\mathbb{H}}((x_1,y_1), (x_2,y_2))=\arccosh\left(1+2\cdot\frac{(x_1-x_2)^2+(y_1-y_2)^2}{(1-x_1^2-y_1^2)(1-x_2^2-y_2^2)}\right).
\end{equation}
See e. g. \cite{beardon}, p. 40, or \cite{wiki} for a readily reachable reference.
Hence, the distance between any two points  $P_1=(x_1,y_1)$ and $P_2=(x_2,y_2)$ in $\mathbb{H}^2$ is uniquely determined by the quantity
\begin{equation}\label{f}
f(P_1,P_2)=2\cdot\frac{(x_1-x_2)^2+(y_1-y_2)^2}{(1-x_1^2-y_1^2)(1-x_2^2-y_2^2)}.
\end{equation}

\section{Moser Spindles in the Hyperbolic Plane}

It is well known that $\chi(\mathbb{H}^2,d)\ge 4$ for every $d>0$. The argument is similar to the one used by Nelson to prove $\chi(\mathbb{R}^2)\ge 4$ and it relies on the fact that a certain 4-chromatic graph, known as the Moser spindle, can be embedded as a distance $d$ graph in the hyperbolic plane.

We include our own argument here as it will be the basis of our construction. More precisely, for every $d>0$ we will find the explicit coordinates of a distance $d$ embedding of the Moser spindle in $\mathbb{H}^2$.

Let $0<R<1$ be some arbitrary number and consider the points $A_1=(0,0)$, $A_2=(R,0)$, and $A_3=(R\cos{\alpha}, R\sin{\alpha})$ where $0<\alpha<2\pi$. The condition that $A_1A_2A_3$ is an equilateral triangle translates to $f(A_1,A_2)=f(A_1,A_3)= f(A_2,A_3)$ which
by equation \eqref{f} is equivalent to the following equality.
\begin{equation}\label{condition1}
\cos{\alpha}=(1+R^2)/2.
\end{equation}
In particular, note that necessarily $0<\alpha<\pi/3$. So far we have that
\begin{equation}\label{firsttriangle}
f(A_1,A_2)=f(A_1,A_3)= f(A_2,A_3)=\frac{2R^2}{1-R^2}=\frac{2\cos{\alpha}-1}{1-\cos{\alpha}}.
\end{equation}
Next, compute the coordinates of the point $A_4\neq A_1$ for which triangle $A_2A_3A_4$ is equilateral. Again, the condition $f(A_2,A_3)=f(A_2,A_4)=f(A_3,A_4)$ leads to $A_4=\left(R(1+\cos{\alpha}/(2\cos{\alpha}), R\sin{\alpha}/(2\cos{\alpha})\right)$.

Let $A_1A_5A_7A_6$ be the image of the rhombus $A_1A_2A_4A_3$ after a counterclockwise rotation around point $A_1=(0,0)$ by an angle $\beta$. The coordinates of the new vertices are
\begin{align*}
A_5&=(R\cos{\beta}, R\sin{\beta}), A_6=(R\cos(\alpha+\beta), R\sin(\alpha+\beta)), \quad \text{and}\\ A_7&=\left(\frac{R(\cos{\beta}+\cos(\alpha+\beta))}{2\cos{\alpha}}, \frac{R(\sin{\beta}+\sin(\alpha+\beta))}{2\cos{\alpha}}\right).
\end{align*}

So far, we have that $f(A_i,A_j)=2\cos{\alpha-1}/(1-\cos{\alpha})$ for all pairs $\{i,j\}=\{1,2\}$, $\{1,3\}$, $\{2,3\}$, $\{2,4\}$, $\{3,4\}$, $\{1,5\}$, $\{1,6\}$, $\{5,6\}$,$\{5,7\}$, $\{6,7\}$.

To complete the spindle, we require $f(A_4, A_7)= (2\cos{\alpha-1})/(1-\cos{\alpha})$, which after some calculations becomes equivalent to the following equality.
\begin{equation}\label{betaalpha}
\cos{\beta}=1-\frac{1-\cos{\alpha}}{8\cos^2{\alpha}(1+\cos{\alpha})}.
\end{equation}
Note that since $\alpha \in (0,\pi/3)$ and the function $1-(1-x)/(8x^2(1+x))$ defined over $(1/2,1)$ is strictly increasing with range $(5/6,1)$, it follows that $\beta$ is uniquely determined by the choice of $\alpha$, and eventually by our initial choice of $0<R<1$. It follows that for all $d>0$, a distance $d$ Moser spindle can be embedded in the hyperbolic plane $\mathbb{H}^2$ by taking
\begin{equation*}
R=\tanh\left(\frac{d}{2}\right), \alpha=\arccos\left(\frac{1+R^2}{2}\right), \beta=\arccos\left(1-\frac{1-\cos{\alpha}}{8\cos^2{\alpha}(1+\cos{\alpha})}\right).
\end{equation*}
In the figure \ref{fig1} below we illustrate three Moser spindles embedded as distance $d$ graphs in the hyperbolic plane where $d=1, 2, 3$.
\begin{figure}[ht]
\centering
\begin{tikzpicture}[line cap=round,line join=round,>=triangle 45,x=6.0cm,y=6.0cm,scale=0.85]
\clip(-1,-1) rectangle (1,1);
\draw [line width=1.pt,dash pattern=on 3pt off 3pt] (0.,0.) circle (6.cm);
\draw [color=qqqqff](-0.6911931925903639,0.08377552499804987) node[anchor=north west] {$d=2$};
\draw [shift={(-0.5798478458372839,0.8630125557695418)},line width=2.pt,color=qqqqff]  plot[domain=2.230762726728423:3.3924561696033413,variable=\t]({-0.5*0.28462993081065147*cos(\t r)+-0.8660254037844387*0.28462993081065147*sin(\t r)},{0.8660254037844387*0.28462993081065147*cos(\t r)+-0.5*0.28462993081065147*sin(\t r)});
\draw [shift={(-0.8287975777154483,0.5775595137931719)},line width=2.pt,color=qqqqff]  plot[domain=2.8179514345574037:4.342362064860824,variable=\t]({-0.5*0.1431559131613423*cos(\t r)+-0.8660254037844387*0.1431559131613423*sin(\t r)},{0.8660254037844387*0.1431559131613423*cos(\t r)+-0.5*0.1431559131613423*sin(\t r)});
\draw [shift={(-1.0103909139611986,0.24520276654774714)},line width=2.pt,color=qqqqff]  plot[domain=3.767765351470587:4.929513579245311,variable=\t]({-0.5*0.28464024903182766*cos(\t r)+-0.8660254037844387*0.28464024903182766*sin(\t r)},{0.8660254037844387*0.28464024903182766*cos(\t r)+-0.5*0.28464024903182766*sin(\t r)});
\draw [shift={(-0.9622315120969085,0.5256938675956728)},line width=2.pt,color=qqqqff]  plot[domain=3.107921267629851:4.269616016899165,variable=\t]({-0.5*0.44972491647672796*cos(\t r)+-0.8660254037844387*0.44972491647672796*sin(\t r)},{0.8660254037844387*0.44972491647672796*cos(\t r)+-0.5*0.44972491647672796*sin(\t r)});
\draw [line width=2.pt,color=qqqqff] (0.,0.)-- (-0.38079707799999984,0.6595598864697688);
\draw [line width=2.pt,color=qqqqff] (0.,0.)-- (-0.5139820892691019,0.5620030927944871);
\draw [line width=2.pt,color=qqqqff] (0.,0.)-- (-0.7052043527054019,0.28759777260447905);
\draw [line width=2.pt,color=qqqqff] (0.,0.)-- (-0.750608136710205,0.12887142935668117);
\draw [shift={(-0.9622315120969085,0.5256938675956728)},line width=2.pt,color=qqqqff]  plot[domain=3.107921267629851:4.269616016899165,variable=\t]({-0.5*0.44972491647672796*cos(\t r)+-0.8660254037844387*0.44972491647672796*sin(\t r)},{0.8660254037844387*0.44972491647672796*cos(\t r)+-0.5*0.44972491647672796*sin(\t r)});
\draw [shift={(-0.8263495971590675,0.7207062502935798)},line width=2.pt,color=qqqqff]  plot[domain=2.8907346549812503:4.0524050111048195,variable=\t]({-0.5*0.4497322800288064*cos(\t r)+-0.8660254037844387*0.4497322800288064*sin(\t r)},{0.8660254037844387*0.4497322800288064*cos(\t r)+-0.5*0.4497322800288064*sin(\t r)});
\draw [shift={(-0.9335633775462284,0.45668336779567403)},line width=2.pt,color=qqqqff]  plot[domain=3.5514372113211023:4.714250276958633,variable=\t]({-0.5*0.2841439471947517*cos(\t r)+-0.8660254037844387*0.2841439471947517*sin(\t r)},{0.8660254037844387*0.2841439471947517*cos(\t r)+-0.5*0.2841439471947517*sin(\t r)});
\draw [shift={(-0.9335633775462284,0.45668336779567403)},line width=2.pt,color=qqqqff]  plot[domain=3.5514372113211023:4.714250276958633,variable=\t]({-0.5*0.2841439471947517*cos(\t r)+-0.8660254037844387*0.2841439471947517*sin(\t r)},{0.8660254037844387*0.2841439471947517*cos(\t r)+-0.5*0.2841439471947517*sin(\t r)});
\draw [shift={(-0.7516511127151841,0.7177192230048923)},line width=2.pt,color=qqqqff]  plot[domain=2.446026247166669:3.6087817123186796,variable=\t]({-0.5*0.28466201513993494*cos(\t r)+-0.8660254037844387*0.28466201513993494*sin(\t r)},{0.8660254037844387*0.28466201513993494*cos(\t r)+-0.5*0.28466201513993494*sin(\t r)});
\draw [line width=2.pt,color=qqwuqq] (0.,0.)-- (0.1396195379683894,-0.8943092499898977);
\draw [line width=2.pt,color=qqwuqq] (0.,0.)-- (-0.13872149866231143,-0.8944566180963275);
\draw [line width=2.pt,color=qqwuqq] (0.,0.)-- (0.2453513866444226,-0.8712492211686165);
\draw [line width=2.pt,color=qqwuqq] (0.,0.)-- (-0.13872149866231143,-0.8944566180963275);
\draw [line width=2.pt,color=qqwuqq] (0.,0.)-- (-0.24447770639241043,-0.8715085619643139);
\draw [shift={(0.06199872805236922,-1.0265869135913699)},line width=2.pt,color=qqwuqq]  plot[domain=2.547159884132635:4.403707478248279,variable=\t]({-0.2700963446858647*0.24030812741145485*cos(\t r)+0.9628333005184929*0.24030812741145485*sin(\t r)},{-0.9628333005184929*0.24030812741145485*cos(\t r)+-0.2700963446858647*0.24030812741145485*sin(\t r)});
\draw [shift={(-0.06096066306216405,-1.0266520146373947)},line width=2.pt,color=qqwuqq]  plot[domain=2.4275229196019255:4.28407051371757,variable=\t]({-0.2700963446858647*0.24030610562364002*cos(\t r)+0.9628333005184929*0.24030610562364002*sin(\t r)},{-0.9628333005184929*0.24030610562364002*cos(\t r)+-0.2700963446858647*0.24030610562364002*sin(\t r)});
\draw [shift={(0.18914152890918035,-0.9907752354641824)},line width=2.pt,color=qqwuqq]  plot[domain=2.9755012752175096:4.832108531375994,variable=\t]({-0.2700963446858647*0.13208336837013207*cos(\t r)+0.9628333005184929*0.13208336837013207*sin(\t r)},{-0.9628333005184929*0.13208336837013207*cos(\t r)+-0.2700963446858647*0.13208336837013207*sin(\t r)});
\draw [shift={(-0.18814131372765175,-0.9909749884678389)},line width=2.pt,color=qqwuqq]  plot[domain=1.999121866474211:3.855729122632695,variable=\t]({-0.2700963446858647*0.13208195221149638*cos(\t r)+0.9628333005184929*0.13208195221149638*sin(\t r)},{-0.9628333005184929*0.13208195221149638*cos(\t r)+-0.2700963446858647*0.13208195221149638*sin(\t r)});
\draw [shift={(-0.06853174284914762,-1.0063492193988923)},line width=2.pt,color=qqwuqq]  plot[domain=2.118776372474165:3.975334252431388,variable=\t]({-0.2700963446858647*0.132094924021733*cos(\t r)+0.9628333005184929*0.132094924021733*sin(\t r)},{-0.9628333005184929*0.132094924021733*cos(\t r)+-0.2700963446858647*0.132094924021733*sin(\t r)});
\draw [shift={(0.06954830490600944,-1.0062761126970574)},line width=2.pt,color=qqwuqq]  plot[domain=2.8558961454188165:4.71245402537604,variable=\t]({-0.2700963446858647*0.1320854119400258*cos(\t r)+0.9628333005184929*0.1320854119400258*sin(\t r)},{-0.9628333005184929*0.1320854119400258*cos(\t r)+-0.2700963446858647*0.1320854119400258*sin(\t r)});
\draw [color=qqwuqq](-0.4304885855442934,-0.4683473295638293) node[anchor=north west] {$d=3$};
\draw [shift={(1.31303,0.64956)},line width=2.pt,color=ffqqqq]  plot[domain=3.408397725259951:3.7935964123140824,variable=\t]({1.*1.0705064850340702*cos(\t r)+0.*1.0705064850340702*sin(\t r)},{0.*1.0705064850340702*cos(\t r)+1.*1.0705064850340702*sin(\t r)});
\draw [line width=2.pt,color=ffqqqq] (0.,0.)-- (0.46212,0.);
\draw [line width=2.pt,color=ffqqqq] (0.,0.)-- (0.2804,0.36732);
\draw [shift={(1.31303,-0.23256)},line width=2.pt,color=ffqqqq]  plot[domain=2.48960733411859:2.8748007190902016,variable=\t]({1.*0.8821197481067975*cos(\t r)+0.*0.8821197481067975*sin(\t r)},{0.*0.8821197481067975*cos(\t r)+1.*0.8821197481067975*sin(\t r)});
\draw [shift={(0.6118672174452774,1.1847997479387702)},line width=2.pt,color=ffqqqq]  plot[domain=4.327172107182222:4.7123694621216465,variable=\t]({1.*0.8821245119204828*cos(\t r)+0.*0.8821245119204828*sin(\t r)},{0.*0.8821245119204828*cos(\t r)+1.*0.8821245119204828*sin(\t r)});
\draw [shift={(0.86332,0.6765)},line width=2.pt,color=ffqqqq]  plot[domain=3.4922934014554285:4.120218708280811,variable=\t]({1.*0.4505233674305917*cos(\t r)+0.*0.4505233674305917*sin(\t r)},{0.*0.4505233674305917*cos(\t r)+1.*0.4505233674305917*sin(\t r)});
\draw [shift={(0.08822508302762735,1.3305682752744463)},line width=2.pt,color=ffqqqq]  plot[domain=4.737656398299413:5.122849783271025,variable=\t]({1.*0.8821178955785899*cos(\t r)+0.*0.8821178955785899*sin(\t r)},{0.*0.8821178955785899*cos(\t r)+1.*0.8821178955785899*sin(\t r)});
\draw [shift={(0.9447410777656234,1.1195800169329426)},line width=2.pt,color=ffqqqq]  plot[domain=3.818860705075532:4.204059392129663,variable=\t]({1.*1.0705026957929626*cos(\t r)+0.*1.0705026957929626*sin(\t r)},{0.*1.0705026957929626*cos(\t r)+1.*1.0705026957929626*sin(\t r)});
\draw [shift={(1.2967388681161274,0.3107485196470109)},line width=2.pt,color=ffqqqq]  plot[domain=2.900087655267968:3.2852850102073923,variable=\t]({1.*0.8821197481067974*cos(\t r)+0.*0.8821197481067974*sin(\t r)},{0.*0.8821197481067974*cos(\t r)+1.*0.8821197481067974*sin(\t r)});
\draw [line width=2.pt,color=ffqqqq] (0.,0.)-- (0.11051155293070097,0.4487319554512836);
\draw [line width=2.pt,color=ffqqqq] (0.,0.)-- (0.4237055275388525,0.18442971355926224);
\draw [color=ffqqqq](0.35812930063456894,-0.05) node[anchor=north west] {$d=1$};
\draw [shift={(5.025527486774983E-4,-1.0021835941287078)},line width=2.pt,color=qqwuqq]  plot[domain=2.342836302219476:4.48834294900387,variable=\t]({-0.2700963446858647*0.06616988514422549*cos(\t r)+0.9628333005184929*0.06616988514422549*sin(\t r)},{-0.9628333005184929*0.06616988514422549*cos(\t r)+-0.2700963446858647*0.06616988514422549*sin(\t r)});
\begin{scriptsize}
\draw [fill=ffffqq] (0.,0.) circle (3.0pt);
\draw [fill=ffffqq] (-0.38079707799999984,0.6595598864697688) circle (2.5pt);
\draw [fill=ffffqq] (-0.6873315159887783,0.5994571381359046) circle (2.5pt);
\draw [fill=ffffqq] (-0.7052043527054019,0.28759777260447905) circle (2.5pt);
\draw [fill=ffffqq] (0.,0.) circle (3.0pt);
\draw [fill=ffffqq] (-0.38079707799999984,0.6595598864697688) circle (2.5pt);
\draw [fill=ffffqq] (0.,0.) circle (3.0pt);
\draw [fill=ffffqq] (-0.5139820892691019,0.5620030927944871) circle (2.5pt);
\draw [fill=ffffqq] (0.,0.) circle (3.0pt);
\draw [fill=ffffqq] (-0.7052043527054019,0.28759777260447905) circle (2.5pt);
\draw [fill=ffffqq] (0.,0.) circle (3.0pt);
\draw [fill=ffffqq] (-0.750608136710205,0.12887142935668117) circle (2.5pt);
\draw [fill=ffffqq] (-0.8003628431864664,0.4372559808892423) circle (2.5pt);
\draw [fill=ffffqq] (0.2453513866444226,-0.8712492211686165) circle (2.5pt);
\draw [fill=ffffqq] (-0.05763940720088828,-0.9706060294154009) circle (2.5pt);
\draw [fill=ffffqq] (0.058618126796958686,-0.9705444766783311) circle (2.5pt);
\draw [fill=ffffqq] (0.1396195379683894,-0.8943092499898977) circle (2.5pt);
\draw [fill=ffffqq] (-0.13872149866231143,-0.8944566180963275) circle (2.5pt);
\draw [fill=ffffqq] (0.,0.) circle (3.0pt);
\draw [fill=ffffqq] (0.2453513866444226,-0.8712492211686165) circle (2.5pt);
\draw [fill=ffffqq] (0.,0.) circle (3.0pt);
\draw [fill=ffffqq] (0.1396195379683894,-0.8943092499898977) circle (2.5pt);
\draw [fill=ffffqq] (0.,0.) circle (3.0pt);
\draw [fill=ffffqq] (-0.13872149866231143,-0.8944566180963275) circle (2.5pt);
\draw [fill=ffffqq] (-0.24447770639241043,-0.8715085619643139) circle (2.5pt);
\draw [fill=ffffqq] (0.,0.) circle (3.0pt);
\draw [fill=ffffqq] (0.,0.) circle (3.0pt);
\draw [fill=ffffqq] (0.,0.) circle (3.0pt);
\draw [fill=ffffqq] (0.,0.) circle (3.0pt);
\draw [fill=ffffqq] (0.2453513866444226,-0.8712492211686165) circle (2.5pt);
\draw [fill=ffffqq] (0.,0.) circle (3.0pt);
\draw [fill=ffffqq] (-0.13872149866231143,-0.8944566180963275) circle (2.5pt);
\draw [fill=ffffqq] (0.,0.) circle (3.0pt);
\draw [fill=ffffqq] (-0.24447770639241043,-0.8715085619643139) circle (2.5pt);
\draw [fill=ffffqq] (0.46212,0.) circle (2.5pt);
\draw [fill=ffffqq] (0.2804,0.36732) circle (2.5pt);
\draw [fill=ffffqq] (0.61185,0.30268) circle (2.5pt);
\draw [fill=ffffqq] (0.440219,0.52172) circle (2.5pt);
\draw [fill=ffffqq] (0.4237055275388525,0.18442971355926224) circle (2.5pt);
\draw [fill=ffffqq] (0.11051155293070097,0.4487319554512836) circle (2.5pt);
\end{scriptsize}
\end{tikzpicture}
\caption{Moser spindles for hyperbolic distances $d=1, 2, 3$}
\label{fig1}
\end{figure}
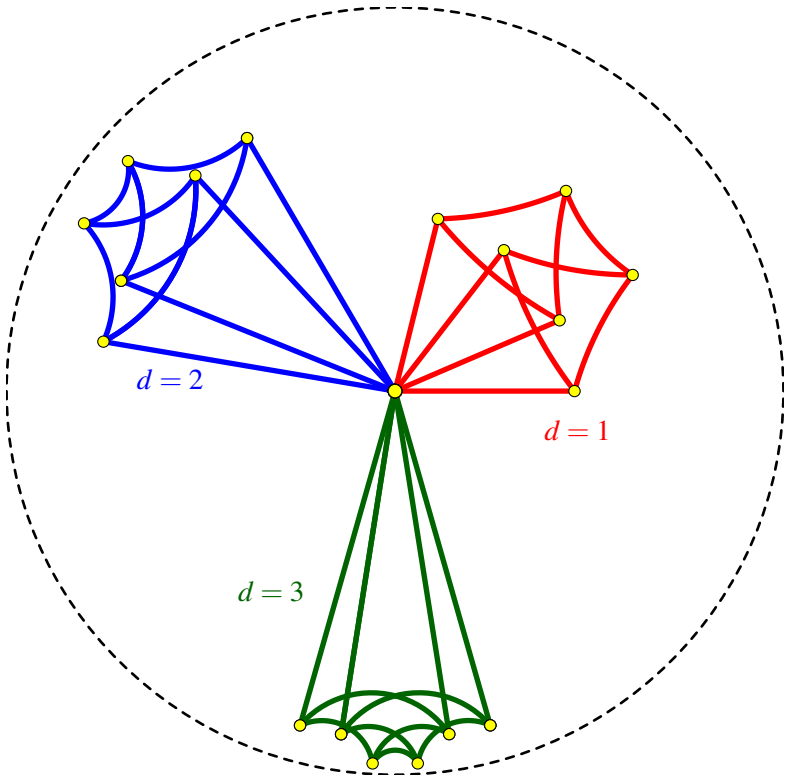

\section{A particularly promising distance}

As shown above, embedding the Moser spindle in the hyperbolic plane as a $d$-distance graph can be done for every $d>0$.
This leaves us with the freedom to choose a value of $d$ to our liking in our attempt to constructing a $5$-chromatic graph.
We settled on the following condition.

\begin{condition}\label{choice}
We choose $d>0$ such that for a distance $d$ embedding of the Moser spindle $\{A_1,A_2,A_3,A_4,A_5,A_6,A_7\}\in \mathbb{H}^2$, the circumcircle of $A_2A_4A_5$ and by symmetry, the circumcircle of $A_3A_6A_7$, both have circumradius equal to $d$.
\end{condition}

Recall that our embedding of the Moser spindle involved four parameters: $d, R, \alpha$ and $\beta$. Any one of these can be used to find the others. It turns out that condition \ref{choice} leads to the following value of $0<\alpha<\pi/3$.
\begin{equation}\label{cosalpha}
\cos{\alpha}=\frac{-1 + \sqrt{5} + \sqrt{22 - 2\sqrt{5}}}{8}\approx 0.6778371470.
\end{equation}
This allows us to find the exact values of the remaining parameters.
\begin{align*}
R&=\sqrt{2\cos{\alpha-1}}\approx 0.596384351,\\
\cos{\beta}&=1-\frac{1-\cos{\alpha}}{8\cos^2{\alpha}(1+\cos{\alpha})}\approx 0.9477621926,
\end{align*}
and for every edge $\{i,j\}$ of the Moser spindle
\begin{equation*}
f(A_i,A_j)=\frac{2\cos{\alpha}-1}{1-\cos{\alpha}} =\frac{5\sqrt{5}-13+(4+\sqrt{5})\sqrt{22-2\sqrt{5}}}{22},
\end{equation*}
Finally, the exact value of the hyperbolic distance $d$ announced in the abstract is
\begin{equation*}
d=\arccosh(1+ f(A_i,A_j))=\arccosh\left(\frac{\cos{\alpha}}{1-\cos{\alpha}}\right)\approx 1.375033509.
\end{equation*}

For the sake of brevity, we omit the computations. However, we will soon provide a simpler approach that allows verifying that these choices lead to condition \ref{choice} being satisfied. Given that none of the values of the parameters listed above are particularly simple, it is important we find a manageable way to handle such expressions.

Instead of using the explicit expression of $\cos{\alpha}$ given in \eqref{cosalpha}, we denote $\cos{\alpha}=c$ and note that
$c$ is the largest root of the equation $16c^4 + 8c^3 - 12c^2 - 2c + 1=0$.
Similarly, we denote $s=\sin{\alpha}$. All coordinates will be expressed in terms of $c, s$ and $R$ where
$s=\sqrt{1-c^2}, R=\sqrt{2c-1}$.

Finally, we denote $f=(2c-1)/(1-c)$, the common value of $f(A_i,A_j)$ where $\{i,j\}$ is an edge in our intended graph.

Since these relations and notations will be extensively used in the sequel we record them below for easy reference
\begin{align}\label{sr}
&16c^4 + 8c^3 - 12c^2 - 2c + 1=0, s=\sqrt{1-c^2}, R=\sqrt{2c-1},\notag \\
&f=\frac{2c-1}{1-c}, d=\arccosh(1+f)=\arccosh\left(\frac{c}{1-c}\right).
\end{align}

We are now in position to introduce our first $d$-distance graph.

\begin{lemma}\label{G9lemma}
Using the above notations let us consider the following points:
\begin{align*}
A_1&=(0,0), A_2=(R,0), A_3=(Rc, Rs), \\
A_4&=(R(-8c^3-4c^2+6c+3/2), Rs(-8c^3-4c^2+6c+1)),\\
A_5&=(R(-4c^3 + 4c^2 + 2c - 1), Rs(16c^3 - 4c^2 - 4c)),\\
A_6&=(R(4c^3 - 4c^2 + 1, Rs(-16c^3 + 12c^2 + 4c - 2),\\
A_7&=(R, Rs(16c^3 + 8c^2 - 8c - 2)),\\
A_8&=(R(-2c^2 + 2c + 1), Rs(8c^2 - 2c - 2)),\\
A_9&=(R(-8c^3 - 2c^2 + 4c + 3/2), Rs(8c^3 - 4c^2 + 1)).
\end{align*}
and the following edge set
\begin{align*}
E_1:=\{&\{1, 2\}, \{1, 3\}, \{1, 5\}, \{1, 6\}, \{2, 3\}, \{2, 4\}, \{2, 8\}, \{3, 4\}, \{3, 9\},\\
 &\{4, 7\},\{4, 8\}, \{5, 6\}, \{5, 7\}, \{5, 8\}, \{6, 7\}, \{6, 9\}, \{7, 9\}\}.
\end{align*}

Then for every $\{i,j\}\in E_1$ we have that $f(A_i,A_j)=f$. In other words, the graph with edge set $E_1$ is a $d$-distance graph
of order $9$ and size $17$.
\end{lemma}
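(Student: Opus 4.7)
The lemma amounts to $17$ polynomial identities, one per edge $\{i,j\}\in E_1$, each asserting that $f(A_i,A_j)=f=(2c-1)/(1-c)$. The key observation is that every coordinate has the shape $x_i=R\,P_i(c)$, $y_i=R\,s\,Q_i(c)$ with $P_i,Q_i\in\mathbb{Q}[c]$, so that
\[
(x_i-x_j)^2+(y_i-y_j)^2=R^2\bigl((P_i-P_j)^2+s^2(Q_i-Q_j)^2\bigr),
\]
and $1-x_i^2-y_i^2=1-R^2\bigl(P_i^2+s^2Q_i^2\bigr)$. After substituting $R^2=2c-1$ and $s^2=1-c^2$, both numerator and denominator of $f(A_i,A_j)$ become polynomials in $c$ alone; clearing denominators reduces each identity $f(A_i,A_j)=f$ to a single univariate polynomial identity in $c$, which can be checked modulo the quartic $16c^4+8c^3-12c^2-2c+1$.

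I would split the $17$ edges into three groups. The $11$ edges $\{1,2\}$, $\{1,3\}$, $\{2,3\}$, $\{2,4\}$, $\{3,4\}$, $\{4,7\}$, $\{1,5\}$, $\{1,6\}$, $\{5,6\}$, $\{5,7\}$, $\{6,7\}$ are precisely the edges of the Moser spindle on $A_1,\dots,A_7$ treated in Section~2, so they are automatic once I confirm that the polynomial coordinates given here agree with the trigonometric coordinates used there. That agreement is itself a reduction mod the quartic: from $16c^4+8c^3-12c^2-2c+1=0$ one reads off $1/c=-16c^3-8c^2+12c+2$, whence $(1+c)/(2c)=-8c^3-4c^2+6c+3/2$, which is the $x$-coordinate of $A_4$; the same kind of reduction yields $\cos\beta=-4c^3+4c^2+2c-1$ from the formula $\cos\beta=1-(1-c)/(8c^2(1+c))$, and analogous identities for $\sin\beta$, $\cos(\alpha+\beta)$, $\sin(\alpha+\beta)$ recover the stated expressions for $A_5,A_6,A_7$.

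The remaining six edges $\{2,8\},\{4,8\},\{5,8\}$ and $\{3,9\},\{6,9\},\{7,9\}$ encode the assertions that $A_8$ is at hyperbolic distance $d$ from each of $A_2,A_4,A_5$, and that $A_9$ is at hyperbolic distance $d$ from each of $A_3,A_6,A_7$. In other words, $A_8$ must be the hyperbolic circumcenter of the triangle $A_2A_4A_5$ and $A_9$ the circumcenter of $A_3A_6A_7$, with common circumradius equal to $d$; this is exactly the configuration arranged by Condition~\ref{choice}, and the explicit formulas given for $A_8,A_9$ are the resulting circumcenters. To complete the proof I would substitute these coordinates into $f(A_i,A_j)$ for each of the six pairs, clear denominators, and verify that the resulting polynomial in $c$ vanishes modulo $16c^4+8c^3-12c^2-2c+1$. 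The principal difficulty is one of bookkeeping rather than of ideas: since $A_8$ and $A_9$ are cubic in $c$, the unreduced numerators and denominators reach degree $10$ or more, and the two triangles are not related by an obvious reflective symmetry of the Moser spindle, so all six reductions must be carried out essentially independently, most safely by computer algebra.
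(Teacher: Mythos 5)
Your proposal is correct and follows essentially the same route as the paper's proof: write each coordinate as $R$ (or $Rs$) times a polynomial in $c$, substitute $R^2=2c-1$ and $s^2=1-c^2$ to turn each equality $f(A_i,A_j)=f=(2c-1)/(1-c)$ into a univariate polynomial identity, and check that each is a multiple of the minimal polynomial $16c^4+8c^3-12c^2-2c+1$, with the verification carried out by a computer algebra system (the paper uses Maple). Your extra observations---that the eleven spindle edges follow once the polynomial coordinates are matched against the trigonometric ones of Section~2, and that $A_8$, $A_9$ are precisely the circumcenters of $A_2A_4A_5$ and $A_3A_6A_7$ arranged by Condition~\ref{choice}---are accurate glosses on the same computation rather than a different argument.
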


\begin{proof}
Vertices $A_1$ through $A_7$ form a Moser spindle; their coordinates are the simplified versions of the coordinates introduced in the previous section subject to the conditions \eqref{sr}. The last two vertices are vertex $A_8$, adjacent to $A_2, A_4, A_5$, and vertex $A_9$, adjacent to $A_3, A_6, A_7$.

Note that the coordinates of each of vertices $A_i$ are of the form
\begin{equation}\label{coordinatesugly}
x_{A_i}=R(m_ic^3+n_ic^2+p_ic+q_i),\,\, \text{and}\,\, y_{A_i}=Rs(u_ic^3+v_ic^2+w_ic+z_i),
\end{equation}
where $m_i, n_i, p_i,q_i, u_i, v_i, w_i$, and $z_i$ are rational numbers.

Taking \eqref{sr} into account, for $i\neq j$ each of the following expressions
is a polynomial with rational coefficients in $c$.
\small{
\begin{align*}
&(x_{A_i}-x_{A_j})^2= (2c-1)\left((m_i-m_j)c^3+(n_i-n_j)c^2+(p_i-p_j)c+(q_i-q_j)\right)^2,\\
&(y_{A_i}-y_{A_j})^2=(2c-1)(1-c^2)\left((u_i-u_j)c^3+(v_i-v_j)c^2+(w_i-w_j)c+(z_i-z_j)\right)^2\\
&x_{A_i}^2+y_{A_i}^2=(2c-1)\left((m_ic^3+n_ic^2+p_ic+q_i)^2+(1-c^2)(u_ic^3+v_ic^2+w_ic+z_i)^2\right),\\
&x_{A_j}^2+y_{A_j}^2=(2c-1)\left((m_jc^3+n_jc^2+p_jc+q_j)^2+(1-c^2)(u_jc^3+v_jc^2+w_jc+z_j)^2\right).
\end{align*}}
Next, using \eqref{f}, it follows that $f(A_i,A_j)$ is a rational expression in $c$, and showing that
$f(A_i,A_j)=f=(2c-1)/(1-c)$ is simply a matter of verifying that a certain integer coefficients polynomial
is a multiple of the minimal polynomial of $c$ listed in \eqref{sr}, $16c^4 + 8c^3 - 12c^2 - 2c + 1$.

We performed these verifications in Maple.
In figure \ref{fig2} we present an embedding of this graph.
\begin{figure}[ht]
\centering
\begin{tikzpicture}[line cap=round,line join=round,>=triangle 45,x=1.0cm,y=1.0cm]
\clip(-1.025,-0.725) rectangle (10.375,8.968333333333309);
\draw [shift={(11.36577689,4.980373977)},line width=2.pt]  plot[domain=3.2227553370603923:3.8864091514031833,variable=\t]({1.*7.3474491634416275*cos(\t r)+0.*7.3474491634416275*sin(\t r)},{0.*7.3474491634416275*cos(\t r)+1.*7.3474491634416275*sin(\t r)});
\draw [shift={(11.36577689,-0.4394)},line width=2.pt]  plot[domain=2.396775937131926:3.0604301451510687,variable=\t]({1.*5.419774256983102*cos(\t r)+0.*5.419774256983102*sin(\t r)},{0.*5.419774256983102*cos(\t r)+1.*5.419774256983102*sin(\t r)});
\draw [shift={(7.3810935928992665,8.65409810703111)},line width=2.pt]  plot[domain=4.048734719469043:4.712388870989123,variable=\t]({1.*5.419774590993394*cos(\t r)+0.*5.419774590993394*sin(\t r)},{0.*5.419774590993394*cos(\t r)+1.*5.419774590993394*sin(\t r)});
\draw [shift={(4.223635749126399,10.554070405446959)},line width=2.pt]  plot[domain=4.375043252933229:5.038697460952371,variable=\t]({1.*5.419778088034865*cos(\t r)+0.*5.419778088034865*sin(\t r)},{0.*5.419778088034865*cos(\t r)+1.*5.419778088034865*sin(\t r)});
\draw [shift={(9.175662672108684,8.351485715627975)},line width=2.pt]  plot[domain=3.5490642466811138:4.212718061023905,variable=\t]({1.*7.347451742905268*cos(\t r)+0.*7.347451742905268*sin(\t r)},{0.*7.347451742905268*cos(\t r)+1.*7.347451742905268*sin(\t r)});
\draw [shift={(10.912963603016532,3.2177019806783687)},line width=2.pt]  plot[domain=2.7230845270951742:3.3867386786152545,variable=\t]({1.*5.419774207031144*cos(\t r)+0.*5.419774207031144*sin(\t r)},{0.*5.419774207031144*cos(\t r)+1.*5.419774207031144*sin(\t r)});
\draw [shift={(11.36577,-6.9080485)},line width=2.pt]  plot[domain=1.895446384027192:2.2344501616052765,variable=\t]({1.*8.769377565721866*cos(\t r)+0.*8.769377565721866*sin(\t r)},{0.*8.769377565721866*cos(\t r)+1.*8.769377565721866*sin(\t r)});
\draw [shift={(-1.6867466467182046,13.182909189300002)},line width=2.pt]  plot[domain=5.2010232364790205:5.540027014057106,variable=\t]({1.*8.769377503748611*cos(\t r)+0.*8.769377503748611*sin(\t r)},{0.*8.769377503748611*cos(\t r)+1.*8.769377503748611*sin(\t r)});
\draw [shift={(9.67425,3.42085)},line width=2.pt]  plot[domain=3.22275428265829:4.211059869120744,variable=\t]({1.*2.3007305389006802*cos(\t r)+0.*2.3007305389006802*sin(\t r)},{0.*2.3007305389006802*cos(\t r)+1.*2.3007305389006802*sin(\t r)});
\draw [shift={(7.063298577211756,7.439731261281167)},line width=2.pt]  plot[domain=3.2244135289635536:4.212719115426007,variable=\t]({1.*2.3007293194957583*cos(\t r)+0.*2.3007293194957583*sin(\t r)},{0.*2.3007293194957583*cos(\t r)+1.*2.3007293194957583*sin(\t r)});
\draw [shift={(8.70078971,5.642401581)},line width=2.pt]  plot[domain=3.2224840894963678:4.211060174657357,variable=\t]({1.*2.748841998091737*cos(\t r)+0.*2.748841998091737*sin(\t r)},{0.*2.748841998091737*cos(\t r)+1.*2.748841998091737*sin(\t r)});
\draw [line width=2.pt] (0.,0.)-- (5.96384,0.);
\draw [line width=2.pt] (0.,0.)-- (5.65523,1.9023336);
\draw [line width=2.pt] (0.,0.)-- (4.042514669,4.384689789);
\draw [line width=2.pt] (0.,0.)-- (2.4297781269113963,5.439769929842892);
\draw [shift={(0.,0.)},line width=1.2pt,dash pattern=on 3pt off 3pt]  plot[domain=0.:1.5707963267948966,variable=\t]({1.*10.*cos(\t r)+0.*10.*sin(\t r)},{0.*10.*cos(\t r)+1.*10.*sin(\t r)});
\draw (-0.785,0.34166666666666573) node[anchor=north west] {$A_1$};
\draw (6.0283333333333164,0.2) node[anchor=north west] {$A_2$};
\draw (4.148333333333318,4.92833333333332) node[anchor=north west] {$A_3$};
\draw (7.535,3.848333333333323) node[anchor=north west] {$A_4$};
\draw (6.095,6.0216666666666505) node[anchor=north west] {$A_7$};
\draw (4.375,8.155) node[anchor=north west] {$A_9$};
\draw (8.628333333333313,1.8616666666666617) node[anchor=north west] {$A_8$};
\draw (1.561666666666654,5.9816666666666505) node[anchor=north west] {$A_6$};
\draw (5.68166666666665,2.715) node[anchor=north west] {$A_5$};
\draw [shift={(8.56685,10.17262)},line width=2.pt]  plot[domain=4.373881983856806:4.7125815823217305,variable=\t]({1.*8.767848548100323*cos(\t r)+0.*8.767848548100323*sin(\t r)},{0.*8.767848548100323*cos(\t r)+1.*8.767848548100323*sin(\t r)});
\draw [shift={(12.782320193946338,3.6839990237244806)},line width=2.pt]  plot[domain=2.7228918157625666:3.061591414227492,variable=\t]({1.*8.769382262652336*cos(\t r)+0.*8.769382262652336*sin(\t r)},{0.*8.769382262652336*cos(\t r)+1.*8.769382262652336*sin(\t r)});
\begin{scriptsize}
\draw [fill=ffffqq] (0.,0.) circle (3.0pt);
\draw [fill=ffffqq] (5.96384,0.) circle (3.0pt);
\draw [fill=ffffqq] (7.381093,3.2343239) circle (3.0pt);
\draw [fill=ffffqq] (4.042514669,4.384689789) circle (3.0pt);
\draw [fill=ffffqq] (5.65523,1.9023336) circle (3.0pt);
\draw [fill=ffffqq] (2.4297781269113963,5.439769929842892) circle (3.0pt);
\draw [fill=ffffqq] (5.960936228902684,5.4202862222517) circle (3.0pt);
\draw [fill=ffqqqq] (8.568539,1.4032379) circle (3.0pt);
\draw [fill=ffqqqq] (4.770455441679783,7.249400608386462) circle (3.0pt);
\end{scriptsize}
\end{tikzpicture}
\caption{Embedding of a Moser spindle with two additional vertices}
\label{fig2}
\end{figure}
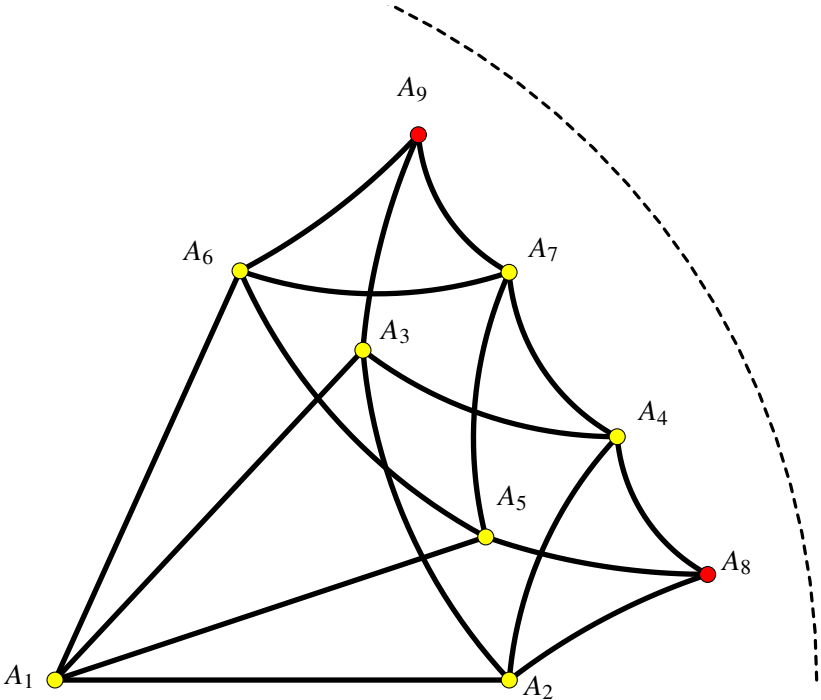

Rather than using the explicit coordinates of the vertex $A_i$ in the format \eqref{coordinatesugly}, we prefer to present
the location of $A_i$ by the octuple $[m_i, n_i, p_i, q_i, u_i, v_i, w_i,z_i]$.

\pagebreak

With this convention in place, the coordinates of the points $A_1$ through $A_9$ \\
\noindent introduced above can be written as:
\begin{align*}
&A_1=[0, 0, 0, 0, 0, 0, 0, 0], A_2=[0, 0, 0, 1, 0, 0, 0, 0], A_3=[0, 0, 1, 0, 0, 0, 0, 1],\\
&A_4=[-8, -4, 6, 3/2, -8, -4, 6, 1], A_5=[-4, 4, 2, -1, 16, -4, -4, 0],\\
&A_6=[4, -4, 0, 1, -16, 12, 4, -2], A_7= [0, 0, 0, 1, 16, 8, -8, -2], \\
&A_8=[0, -2, 2, 1, 0, 8, -2, -2], A_9=[-8, -2, 4, 3/2, 8, -4, 0, 1].
\end{align*}

\end{proof}

\section{Augmenting the graph}

Let $G_9$ be the graph of order $9$ and size $17$ defined in Lemma \ref{G9lemma}. Certainly, $G_9$ has chromatic number equal to $4$. The idea is to augment this graph by considering additional points. In the first phase, for every $1\le i<j\le 9$ we look for points $M$ for which $f(M, A_i)=f(M, A_j)=f$. This leads to a list of $19$ additional points, labeled $A_{10}$ through $A_{28}$.
Table \ref{table1} below contains to coordinates of these points and the neighbors of each such vertex among the vertices of $G_9$.

\begin{table}[hbtp]
\begin{centering}
\begin{tabular}{c|c|c}
   New Vertex        & Coordinates           & Neighbors in $G_9$       \\
    \hline
$A_{10}$ & $[0, 0, 1, 0, 0, 0, 0, -1]$ & $A_1$, $A_2$\\
$A_{11}$ & $[0, 2, 0, -1, 0, 0, 2, 0]$ & $A_1$, $A_3$\\
$A_{12}$ & $[16, 4, -6, -1, -16, 8, 0, 0]/2$ & $A_1$, $A_5$\\
$A_{13}$ & $[-16, 4, 2, 1, 48, -24, -8, 4]/2$ & $A_1$, $A_6$\\
$A_{14}$ & $[-96, 64, 36, -1, 592, -8, -164, -18]/58$ & $A_2$, $A_7$\\
$A_{15}$ & $[64, 112, 34, -9, 240, 72, -148, -12]/58$ & $A_2$, $A_8$\\
$A_{16}$ & $[8, 12, -4, -3, -16, 0, 8, 2]/2$ & $A_3$, $A_4$\\
$A_{17}$ & $[16, 4, -8, 1, -16, 8, 8, -2]/2$ & $A_3$, $A_5$\\
$A_{18}$ & $[16, 8, -10, -1, -16, -8, 12, 4]/2$ & $A_3$, $A_9$\\
$A_{19}$ & $[-104, -8, 126, 11, 16, 144, 52, -24]/58$ & $A_3$, $A_9$\\
$A_{20}$ & $[-100, 28, 52, 5, 176, -156, -8, 26]/29$ & $A_4$, $A_6$\\
$A_{21}$ & $[12, 10, -7, -3, 0, -4, 2, 1]$ & $A_4$, $A_7$\\
$A_{22}$ & $[-8, 0, 6, 1, 16, 0, -4, 0]/2$ & $A_4$, $A_7$\\
$A_{23}$ & $[32, 28, -16, -9, 16, 8, -8, -2]/2$ & $A_4$, $A_8$\\
$A_{24}$ & $[64, -12, -16, -1, -144, 88, 24, -10]/2$ & $A_5$, $A_7$\\
$A_{25}$ & $[12, 8, -6, -2, 0, 4, 0, -2]$ & $A_5$, $A_8$\\
$A_{26}$ & $[80, 24, 28, 25, 48, 200, -76, -14/58$ & $A_5$, $A_8$\\
$A_{27}$ & $[-96, 6, 65, -1, 128, -8, 10, 11]/29$ & $A_6$, $A_9$\\
$A_{28}$ & $[12, 8, -5, -3, -16, -4, 8, 3]$ & $A_7$, $A_9$
\end{tabular}
\caption{\small{Points in $\mathbb{H}^2$ with at least two neighbors among $A_1,A_2,\ldots, A_9$}}
\label{table1}
\end{centering}
\end{table}

Moreover, there are exactly six pairs $(i,j)$, $10\le i< j\le 28$ for which $f(A_i,A_j)=f$.
These accidental edges are $\{11, 18\}$, $\{11, 21\}$, $\{12, 21\}$, $\{12, 25\}$, $\{17, 19\}$, and  $\{17, 26\}$.
In the end, we obtain a graph $G_{28}$ of order $28$ and size $61$ whose distance $d$ embedding is shown if figure \ref{fig28}.
Vertices $A_1$ through $A_{9}$ are shown in yellow, while the new vertices, $A_{10}$ through $A_{28}$, appear in red.
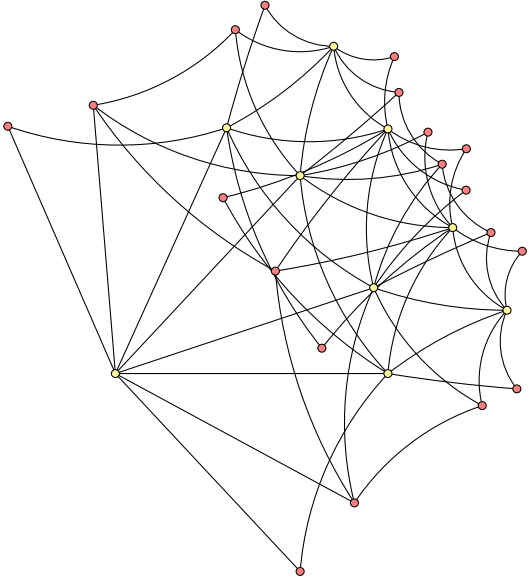
\begin{figure}[ht]
\centering
\begin{tikzpicture}[scale=6.0,line cap=round,line join=round,>=triangle 45]
\tikzstyle{gnode}=[draw=black, fill=yellow!50!white, shape=circle, minimum size=3pt, inner sep=0pt, outer sep=0pt];
\tikzstyle{rnode}=[draw=black, fill=red!50!white, shape=circle, minimum size=3pt, inner sep=0pt, outer sep=0pt];

\draw (0.0000,0.0000) -- (0.5964,0.0000);
\draw (0.0000,0.0000) -- (0.4043,0.4385);
\draw (0.0000,0.0000) -- (0.5652,0.1902);
\draw (0.0000,0.0000) -- (0.2433,0.5445);
\draw (0.0000,0.0000) -- (0.4043,-0.4385);
\draw (0.0000,0.0000) -- (-0.0484,0.5944);
\draw (0.0000,0.0000) -- (0.5230,-0.2866);
\draw (0.0000,0.0000) -- (-0.2354,0.5479);
\draw (0.5964,0.0000) arc(222.6749:184.6503:0.7347);
\draw (0.5964,0.0000) arc(175.3497:137.3251:0.5420);
\draw (0.5964,0.0000) arc(128.0246:108.6011:0.8769);
\draw (0.5964,0.0000) arc(137.3251:175.3497:0.7347);
\draw (0.5964,0.0000) arc(237.9317:207.6793:1.0174);
\draw (0.5964,0.0000) arc(260.6994:265.5721:3.3425);
\draw (0.4043,0.4385) arc(231.9754:270.0000:0.5420);
\draw (0.4043,0.4385) arc(175.3497:155.9263:0.8769);
\draw (0.4043,0.4385) arc(270.0000:231.9754:0.7347);
\draw (0.4043,0.4385) arc(279.3006:298.7240:0.8769);
\draw (0.4043,0.4385) arc(260.6994:288.6011:0.6469);
\draw (0.4043,0.4385) arc(222.6749:184.6503:0.5420);
\draw (0.4043,0.4385) arc(308.0246:312.8972:3.3425);
\draw (0.7381,0.3234) arc(241.2760:184.6503:0.2746);
\draw (0.7381,0.3234) arc(184.6503:241.2760:0.2301);
\draw (0.7381,0.3234) arc(222.6749:166.0492:0.2301);
\draw (0.7381,0.3234) arc(123.8579:142.1978:1.2289);
\draw (0.7381,0.3234) arc(288.6011:279.3006:2.4656);
\draw (0.7381,0.3234) arc(193.9508:146.6257:0.2207);
\draw (0.7381,0.3234) arc(231.9754:270.0000:0.2471);
\draw (0.5652,0.1902) arc(241.2760:203.2514:0.7347);
\draw (0.5652,0.1902) arc(193.9508:155.9263:0.5420);
\draw (0.5652,0.1902) arc(250.5765:270.0000:0.8769);
\draw (0.5652,0.1902) arc(155.9263:193.9508:0.7347);
\draw (0.5652,0.1902) arc(165.2268:137.3251:0.6469);
\draw (0.5652,0.1902) arc(146.6257:127.2022:0.8769);
\draw (0.5652,0.1902) arc(203.2514:241.2760:0.5420);
\draw (0.5652,0.1902) arc(117.9017:113.0290:3.3425);
\draw (0.2433,0.5445) arc(250.5765:288.6011:0.5420);
\draw (0.2433,0.5445) arc(297.9017:317.3251:0.8769);
\draw (0.2433,0.5445) arc(288.6011:250.5765:0.7347);
\draw (0.2433,0.5445) arc(187.9946:218.2470:1.0174);
\draw (0.2433,0.5445) arc(165.2268:160.3542:3.3425);
\draw (0.5964,0.5420) arc(241.2760:184.6503:0.2301);
\draw (0.5964,0.5420) arc(302.0683:283.7285:1.2289);
\draw (0.5964,0.5420) arc(137.3251:146.6257:2.4656);
\draw (0.5964,0.5420) arc(231.9754:279.3006:0.2207);
\draw (0.5964,0.5420) arc(203.2514:259.8771:0.2301);
\draw (0.5964,0.5420) arc(193.9508:155.9263:0.2471);
\draw (0.8569,0.1403) arc(155.9263:218.2470:0.1697);
\draw (0.8569,0.1403) arc(193.9508:137.3251:0.1422);
\draw (0.8569,0.1403) arc(137.3251:193.9508:0.2301);
\draw (0.8569,0.1403) arc(222.6749:160.3542:0.1697);
\draw (0.4776,0.7251) arc(288.6011:231.9754:0.2301);
\draw (0.4776,0.7251) arc(203.2514:265.5721:0.1697);
\draw (0.4776,0.7251) arc(270.0000:207.6793:0.1697);
\draw (0.4776,0.7251) arc(231.9754:288.6011:0.1422);
\draw (-0.0484,0.5944) arc(279.3006:317.3251:0.5420);
\draw (-0.0484,0.5944) arc(213.3743:241.2760:1.1240);
\draw (0.5230,-0.2866) arc(212.5520:184.6503:1.1240);
\draw (0.5230,-0.2866) arc(146.6257:108.6011:0.5420);
\draw (0.7151,0.4638) arc(241.2760:180.2224:0.1823);
\draw (0.7151,0.4638) arc(184.6503:245.7039:0.1823);
\node[gnode] at (0.0000,0.0000) {};
\node[gnode] at (0.5964,0.0000) {};
\node[gnode] at (0.4043,0.4385) {};
\node[gnode] at (0.7381,0.3234) {};
\node[gnode] at (0.5652,0.1902) {};
\node[gnode] at (0.2433,0.5445) {};
\node[gnode] at (0.5964,0.5420) {};
\node[gnode] at (0.8569,0.1403) {};
\node[gnode] at (0.4776,0.7251) {};

\node[rnode] at (0.4043,-0.4385) {};
\node[rnode] at (-0.0484,0.5944) {};
\node[rnode] at (0.5230,-0.2866) {};
\node[rnode] at (-0.2354,0.5479) {};
\node[rnode] at (0.2356,0.3896) {};
\node[rnode] at (0.8785,-0.0340) {};
\node[rnode] at (0.6840,0.5349) {};
\node[rnode] at (0.7151,0.4638) {};
\node[rnode] at (0.2625,0.7619) {};
\node[rnode] at (0.6205,0.6229) {};
\node[rnode] at (0.4518,0.0562) {};
\node[rnode] at (0.3501,0.2270) {};
\node[rnode] at (0.7680,0.4980) {};
\node[rnode] at (0.8903,0.2710) {};
\node[rnode] at (0.7673,0.4063) {};
\node[rnode] at (0.8027,-0.0711) {};
\node[rnode] at (0.8218,0.3124) {};
\node[rnode] at (0.3273,0.8160) {};
\node[rnode] at (0.6106,0.7023) {};

\end{tikzpicture}
\caption{Distance $d$ embedding of $G_{28}$}
\label{fig28}
\end{figure}

Since the chromatic number of $G_{28}$ is still $4$ we continue enlarging the graph by adding new vertices. In this second stage, we identify all points $M\in \mathbb{H}^2$ which are not already among the vertices of $G_{28}$ and which are at distance $d$ from at least \emph{three} vertices of $G_{28}$. This leads to a list of $14$ additional points, labeled $A_{29}$ through $A_{42}$.
Table \ref{table2} below contains to coordinates of these points and the neighbors of each such vertex among the vertices of $G_{28}$.

\begin{table}[hbtp]
\begin{centering}
\begin{tabular}{c|c|c}
   New Vertex        & Coordinates           & Neighbors in $G_{28}$       \\
    \hline
$A_{29}$ & $[-16, -8, 12, 3, 16, 8, -12, -2]/2$ & $A_2, A_{10}, A_{25}$\\
$A_{30}$ & $[-4, 2, 4, -1, 0, -4, 2, 0]$ & $A_2, A_{12}, A_{15}$\\
$A_{31}$ & $[112, 82, -48, -17, -48, -8, 26, 10]/19$ & $A_3, A_{17}, A_{23}, A_{25}$\\
$A_{32}$ & $[40, 108, 10, -21, -48, -160, 64, 48]/38$ & $A_4, A_{12}, A_{21}$\\
$A_{33}$ & $[8, 116, 60, 19, 16, -96, 120, 38]/82$ & $A_4, A_{17}, A_{22}, A_{26}$\\
$A_{34}$ & $[8, 12, -6, -1, 16, -16, 0, 4]/2$ & $A_4, A_{23}, A_{24}$\\
$A_{35}$ & $[116, 70, -47, -21, -48, 68, 26, -9]/19$ & $A_5, A_{17}, A_{18}, A_{28}$\\
$A_{36}$ & $[0, 2, -1, 0, 16, 0, -6, 1]$ & $A_6, A_{11},A_{27}$
\end{tabular}
\end{centering}
\end{table}

\begin{table}[hbtp]
\begin{centering}
\begin{tabular}{c|c|c}
$A_{37}$ & $[-12, -8, 8, 2, 0, 4, 0, 0]$ & $A_6, A_{13}, A_{18}$\\
$A_{38}$ & $[56, 60, -24, -18, 128, 72, -44, -14]/19$ & $A_7, A_{11}, A_{21}$\\
$A_{39}$ & $[8, 12, -4, -3, 16, 0, 0, -2]/2$ & $A_7, A_{16}, A_{28}$\\
$A_{40}$ & $[224, 132, -42, -1, 208, 392, -80, -80]/82$ & $A_7, A_{17}, A_{19}, A_{22}$\\
$A_{41}$ & $[16, 6, -11, 1, 0, 0, 2, -1]$ & $A_{15}, A_{21}, A_{23}$\\
$A_{42}$ & $[16, 8, -12, 1, -16, -8, 20, -2/2$, & $A_{21}, A_{27}, A_{28}$
\end{tabular}
\caption{\small{Points in $\mathbb{H}^2$ with at least three neighbors among $A_1,A_2,\ldots, A_{28}$}}
\label{table2}
\end{centering}
\end{table}
As in the prior stage, there are several pairs $(i,j)$, with $29\le i< j\le 42$, for which $f(A_i,A_j)=f$.
These accidental edges are $\{30, 32\}$, $\{33, 34\}$, $\{36, 38\}$, and $\{39, 40\}$.
Eventually, we obtain a graph $G_{42}$ of order $42$ and size $111$ whose distance $d$ embedding is shown if figure \ref{fig42}.
Vertices $A_1$ through $A_{28}$ are shown in yellow, while the new vertices, $A_{29}$ through $A_{42}$, appear in red.

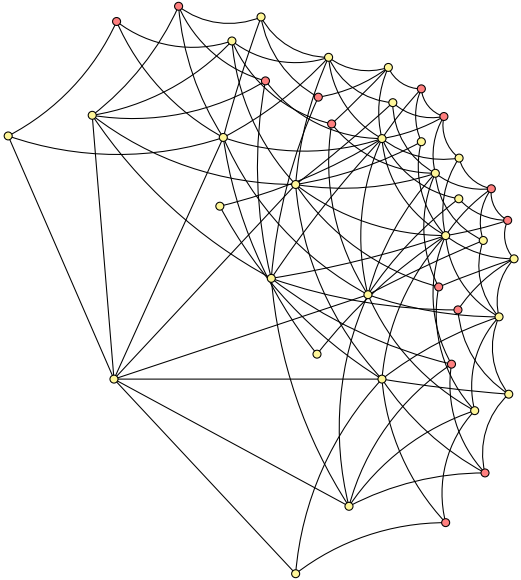
\begin{figure}[ht]
\centering
\begin{tikzpicture}[scale=5.9,line cap=round,line join=round,>=triangle 45]
\tikzstyle{gnode}=[draw=black, fill=yellow!50!white, shape=circle, minimum size=3pt, inner sep=0pt, outer sep=0pt];
\tikzstyle{rnode}=[draw=black, fill=red!50!white, shape=circle, minimum size=3pt, inner sep=0pt, outer sep=0pt];

\draw (0.0000,0.0000) -- (0.5964,0.0000);
\draw (0.0000,0.0000) -- (0.4043,0.4385);
\draw (0.0000,0.0000) -- (0.5652,0.1902);
\draw (0.0000,0.0000) -- (0.2433,0.5445);
\draw (0.0000,0.0000) -- (0.4043,-0.4385);
\draw (0.0000,0.0000) -- (-0.0484,0.5944);
\draw (0.0000,0.0000) -- (0.5230,-0.2866);
\draw (0.0000,0.0000) -- (-0.2354,0.5479);
\draw (0.5964,0.0000) arc(222.6749:184.6503:0.7347);
\draw (0.5964,0.0000) arc(175.3497:137.3251:0.5420);
\draw (0.5964,0.0000) arc(128.0246:108.6011:0.8769);
\draw (0.5964,0.0000) arc(137.3251:175.3497:0.7347);
\draw (0.5964,0.0000) arc(237.9317:207.6793:1.0174);
\draw (0.5964,0.0000) arc(260.6994:265.5721:3.3425);
\draw (0.5964,0.0000) arc(184.6503:222.6749:0.5420);
\draw (0.5964,0.0000) arc(213.3743:241.2760:0.6469);
\draw (0.4043,0.4385) arc(231.9754:270.0000:0.5420);
\draw (0.4043,0.4385) arc(175.3497:155.9263:0.8769);
\draw (0.4043,0.4385) arc(270.0000:231.9754:0.7347);
\draw (0.4043,0.4385) arc(279.3006:298.7240:0.8769);
\draw (0.4043,0.4385) arc(260.6994:288.6011:0.6469);
\draw (0.4043,0.4385) arc(222.6749:184.6503:0.5420);
\draw (0.4043,0.4385) arc(308.0246:312.8972:3.3425);
\draw (0.4043,0.4385) arc(213.3743:254.7432:0.5566);
\draw (0.7381,0.3234) arc(241.2760:184.6503:0.2746);
\draw (0.7381,0.3234) arc(184.6503:241.2760:0.2301);
\draw (0.7381,0.3234) arc(222.6749:166.0492:0.2301);
\draw (0.7381,0.3234) arc(123.8579:142.1978:1.2289);
\draw (0.7381,0.3234) arc(288.6011:279.3006:2.4656);
\draw (0.7381,0.3234) arc(193.9508:146.6257:0.2207);
\draw (0.7381,0.3234) arc(231.9754:270.0000:0.2471);
\draw (0.7381,0.3234) arc(155.9263:209.2076:0.3234);
\draw (0.7381,0.3234) arc(146.6257:125.5135:0.3998);
\draw (0.7381,0.3234) arc(279.3006:288.6011:0.8769);
\draw (0.5652,0.1902) arc(241.2760:203.2514:0.7347);
\draw (0.5652,0.1902) arc(193.9508:155.9263:0.5420);
\draw (0.5652,0.1902) arc(250.5765:270.0000:0.8769);
\draw (0.5652,0.1902) arc(155.9263:193.9508:0.7347);
\draw (0.5652,0.1902) arc(165.2268:137.3251:0.6469);
\draw (0.5652,0.1902) arc(146.6257:127.2022:0.8769);
\draw (0.5652,0.1902) arc(203.2514:241.2760:0.5420);
\draw (0.5652,0.1902) arc(117.9017:113.0290:3.3425);
\draw (0.5652,0.1902) arc(212.5520:171.1831:0.5566);
\draw (0.2433,0.5445) arc(250.5765:288.6011:0.5420);
\draw (0.2433,0.5445) arc(297.9017:317.3251:0.8769);
\draw (0.2433,0.5445) arc(288.6011:250.5765:0.7347);
\draw (0.2433,0.5445) arc(187.9946:218.2470:1.0174);
\draw (0.2433,0.5445) arc(165.2268:160.3542:3.3425);
\draw (0.2433,0.5445) arc(212.5520:184.6503:0.6469);
\draw (0.2433,0.5445) arc(241.2760:203.2514:0.5420);
\draw (0.5964,0.5420) arc(241.2760:184.6503:0.2301);
\draw (0.5964,0.5420) arc(302.0683:283.7285:1.2289);
\draw (0.5964,0.5420) arc(137.3251:146.6257:2.4656);
\draw (0.5964,0.5420) arc(231.9754:279.3006:0.2207);
\draw (0.5964,0.5420) arc(203.2514:259.8771:0.2301);
\draw (0.5964,0.5420) arc(193.9508:155.9263:0.2471);
\draw (0.5964,0.5420) arc(270.0000:216.7186:0.3234);
\draw (0.5964,0.5420) arc(146.6257:137.3251:0.8769);
\draw (0.5964,0.5420) arc(279.3006:300.4128:0.3998);
\draw (0.8569,0.1403) arc(155.9263:218.2470:0.1697);
\draw (0.8569,0.1403) arc(193.9508:137.3251:0.1422);
\draw (0.8569,0.1403) arc(137.3251:193.9508:0.2301);
\draw (0.8569,0.1403) arc(222.6749:160.3542:0.1697);
\draw (0.4776,0.7251) arc(288.6011:231.9754:0.2301);
\draw (0.4776,0.7251) arc(203.2514:265.5721:0.1697);
\draw (0.4776,0.7251) arc(270.0000:207.6793:0.1697);
\draw (0.4776,0.7251) arc(231.9754:288.6011:0.1422);
\draw (0.4043,-0.4385) arc(128.0246:90.0000:0.5420);
\draw (-0.0484,0.5944) arc(279.3006:317.3251:0.5420);
\draw (-0.0484,0.5944) arc(213.3743:241.2760:1.1240);
\draw (-0.0484,0.5944) arc(308.0246:335.9263:0.6469);
\draw (-0.0484,0.5944) arc(260.6994:302.0683:0.5566);
\draw (0.5230,-0.2866) arc(212.5520:184.6503:1.1240);
\draw (0.5230,-0.2866) arc(146.6257:108.6011:0.5420);
\draw (0.5230,-0.2866) arc(117.9017:90.0000:0.6469);
\draw (0.5230,-0.2866) arc(165.2268:123.8579:0.5566);
\draw (-0.2354,0.5479) arc(297.9017:335.9263:0.5420);
\draw (0.8785,-0.0340) arc(132.8972:193.9508:0.1823);
\draw (0.8785,-0.0340) arc(236.8481:184.6503:0.2512);
\draw (0.6840,0.5349) arc(204.0737:155.9263:0.1460);
\draw (0.7151,0.4638) arc(241.2760:180.2224:0.1823);
\draw (0.7151,0.4638) arc(184.6503:245.7039:0.1823);
\draw (0.7151,0.4638) arc(155.9263:207.4181:0.2949);
\draw (0.7151,0.4638) arc(231.9754:276.7895:0.1697);
\draw (0.7151,0.4638) arc(270.0000:218.5082:0.2949);
\draw (0.7151,0.4638) arc(193.9508:149.1368:0.1697);
\draw (0.2625,0.7619) arc(203.2514:256.5328:0.3234);
\draw (0.2625,0.7619) arc(288.6011:231.9754:0.2746);
\draw (0.6205,0.6229) arc(227.5475:281.8117:0.1292);
\draw (0.3501,0.2270) arc(231.9754:256.5328:1.0466);
\draw (0.3501,0.2270) arc(193.9508:169.3935:1.0466);
\draw (0.3501,0.2270) arc(250.5765:270.0000:1.2492);
\draw (0.3501,0.2270) arc(175.3497:155.9263:1.2492);
\draw (0.7680,0.4980) arc(193.9508:258.1883:0.0937);
\draw (0.7680,0.4980) arc(231.9754:167.7379:0.0937);
\draw (0.8903,0.2710) arc(108.6011:112.7678:2.4656);
\draw (0.8903,0.2710) arc(222.6749:155.9263:0.0799);
\draw (0.8903,0.2710) arc(118.7240:146.6257:0.3520);
\draw (0.7673,0.4063) arc(221.8525:270.0000:0.1460);
\draw (0.8027,-0.0711) arc(137.3251:193.9508:0.2746);
\draw (0.8027,-0.0711) arc(222.6749:169.3935:0.3234);
\draw (0.8218,0.3124) arc(198.3787:144.1146:0.1292);
\draw (0.3273,0.8160) arc(293.0290:231.9754:0.1823);
\draw (0.3273,0.8160) arc(189.0782:241.2760:0.2512);
\draw (0.6106,0.7023) arc(317.3251:313.1585:2.4656);
\draw (0.6106,0.7023) arc(203.2514:270.0000:0.0799);
\draw (0.6106,0.7023) arc(307.2022:279.3006:0.3520);
\draw (0.8257,-0.2114) arc(222.6749:171.1831:0.2949);
\draw (0.8397,0.4289) arc(172.8386:241.2760:0.0711);
\draw (0.1438,0.8401) arc(203.2514:254.7432:0.2949);
\draw (0.6840,0.6540) arc(184.6503:253.0876:0.0711);
\node[gnode] at (0.0000,0.0000) {};
\node[gnode] at (0.5964,0.0000) {};
\node[gnode] at (0.4043,0.4385) {};
\node[gnode] at (0.7381,0.3234) {};
\node[gnode] at (0.5652,0.1902) {};
\node[gnode] at (0.2433,0.5445) {};
\node[gnode] at (0.5964,0.5420) {};
\node[gnode] at (0.8569,0.1403) {};
\node[gnode] at (0.4776,0.7251) {};
\node[gnode] at (0.4043,-0.4385) {};
\node[gnode] at (-0.0484,0.5944) {};
\node[gnode] at (0.5230,-0.2866) {};
\node[gnode] at (-0.2354,0.5479) {};
\node[gnode] at (0.2356,0.3896) {};
\node[gnode] at (0.8785,-0.0340) {};
\node[gnode] at (0.6840,0.5349) {};
\node[gnode] at (0.7151,0.4638) {};
\node[gnode] at (0.2625,0.7619) {};
\node[gnode] at (0.6205,0.6229) {};
\node[gnode] at (0.4518,0.0562) {};
\node[gnode] at (0.3501,0.2270) {};
\node[gnode] at (0.7680,0.4980) {};
\node[gnode] at (0.8903,0.2710) {};
\node[gnode] at (0.7673,0.4063) {};
\node[gnode] at (0.8027,-0.0711) {};
\node[gnode] at (0.8218,0.3124) {};
\node[gnode] at (0.3273,0.8160) {};
\node[gnode] at (0.6106,0.7023) {};

\node[rnode] at (0.7381,-0.3234) {};
\node[rnode] at (0.8257,-0.2114) {};
\node[rnode] at (0.7226,0.2077) {};
\node[rnode] at (0.7511,0.0337) {};
\node[rnode] at (0.8397,0.4289) {};
\node[rnode] at (0.8761,0.3577) {};
\node[rnode] at (0.4844,0.5750) {};
\node[rnode] at (0.1438,0.8401) {};
\node[rnode] at (0.0058,0.8058) {};
\node[rnode] at (0.3371,0.6720) {};
\node[rnode] at (0.6840,0.6540) {};
\node[rnode] at (0.7342,0.5917) {};
\node[rnode] at (0.7655,0.1560) {};
\node[rnode] at (0.4547,0.6353) {};
\end{tikzpicture}
\caption{Distance $d$ embedding of $G_{42}$}
\label{fig42}
\end{figure}
Since the chromatic number of $G_{42}$ is still equal to $4$, we apply to this graph the same procedure we used earlier on $G_{28}$. More precisely, we identify a set consisting of $26$ new points, each of which is at distance $d$ from at least three vertices of $G_{42}$. Adjoining these new vertices to $G_{42}$ we obtain a graph $G_{68}$ with $68$ vertices and $201$ edges whose distance $d$ embedding is shown in figure \ref{fig68}. Applying this augmentation process four more times we obtain the graphs $G_{119}$, $G_{226}$, $G_{455}$, and finally, $G_{762}$, whose orders are indicated by their subscripts and whose sizes are $385$, $786$, $1679$, and $2983$, respectively.

\begin{figure}[htbp!]
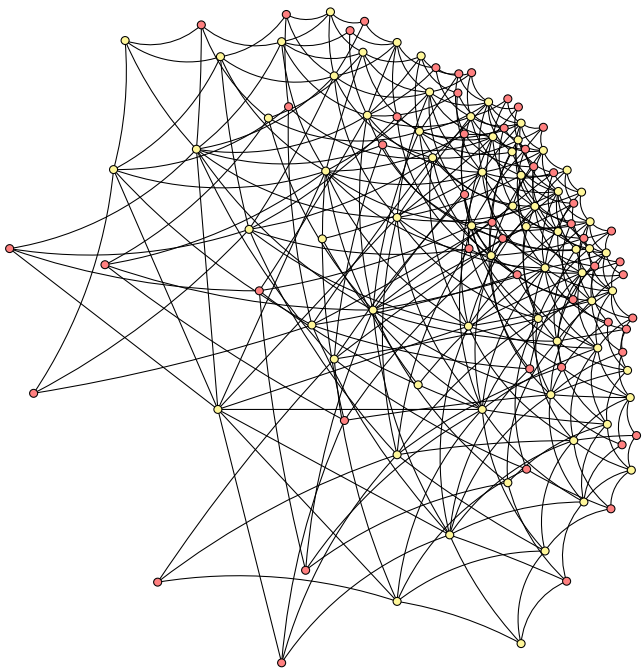

\centering

\caption{Distance $d$ embedding of $G_{119}$}
\label{fig119}
\end{figure}

\section{Graph coloring algorithm}

This section outlines a simple algorithm that can be used to verify that $G_{762}$ is not $4$-chromatic.
To begin we fix an ordering of the vertices, the one given by the construction described above works well.
The algorithm maintains two data structures.  The first of these is a simple array of colors containing
the current color of each vertex, where the value UNCOLORED is viewed as a color.
The second data structure maintains the set of colors that could be assigned to each uncolored vertex
without creating a monochromatic edge.
Manipulating the second data structure efficiently is the key to obtaining good performance.
The outline of the algorithm is given below.

When the algorithm is applied to $G_{762}$ with the given vertex order, the depth of the
recursion never exceeds $78$, at which point (or earlier) there is an uncolored vertex with no
feasible colors remaining. The running time on an Intel i7-11700 is approximately 0.60 seconds.
In fact the subgraph induced by the first $622$ vertices is $5$-chromatic, which
takes 0.52 seconds to run and reaches a maximum recursion depth of $108$.  The subgraph
induced by the first $621$ vertices is $4$-chromatic.  A coloring is found in
0.02 seconds.

As it can be expected from the figures describing $G_{68}$ and $G_{119}$, the embedding of $G_{762}$ as a $d$ distance graph is rather messy. Nevertheless, we include a plot of the vertex set, mainly for illustrative purposes .

\begin{figure}[ht]
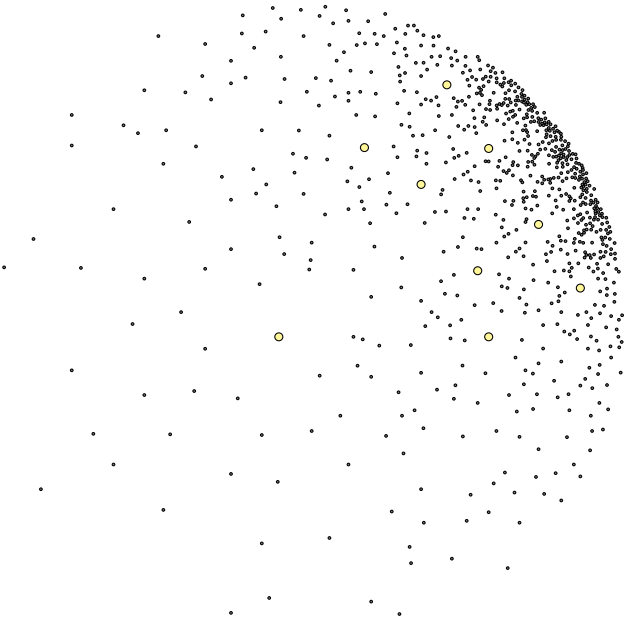

\centering

\caption{Vertex set of $G_{762}$; in yellow, the vertices of the starting graph, $G_9$}
\label{fig762}
\end{figure}

\begin{algorithm}[H]
\begin{small}
\caption{Coloring Search}
\label{coloralgo}
\begin{algorithmic}
\algnewcommand{\BigComment}[1]{\State \(\triangleright\) #1}
\Procedure{Search}{$vertex,colors,feasible$} \Comment{The graph is fixed and global}
  \If{$vertex = n$} \Comment{Vertices numbered $0 \cdots n-1$}
    \State Output $colors$
    \State Return
  \EndIf

  \State
  \BigComment{If the color of a vertex has already been forced, we skip ahead to the next level.}
  \State

  \If{$color[vertex] \not= UNCOLORED$} \Comment{Vertex already colored}
    \State Search($vertex+1,colors, feasible$)
    \State Return
  \EndIf
  \State
  \BigComment{The $Assign$ function colors a vertex and recursively considers all implications.
    This may force colors on other vertices.  Conflicts may be discovered, in which case it returns False.
    The $UnAssign$ function restores the previous state of the coloring.}
  \State
  \For{each color $c$ in $feasible[c]$}
    \If{Assign($vertex, c, colors, feasible$)}
      \State Search($vertex+1,colors,feasible$)
      \State UnAssign($vertex, c, colors, feasible$)
    \EndIf
  \EndFor
\EndProcedure
\end{algorithmic}
\end{small}
\end{algorithm}

\section{Conclusions}
We proved that for the choice $d=\arccosh\left(\frac{9+5\sqrt{5}+(4+\sqrt{5})\sqrt{22-2\sqrt{5}}}{22}\right)\approx 1.37503$,
every $4$-coloring of the hyperbolic plane $\mathbb{H}^2$ contains two identically colored points which are distance $d$ apart.
Combining this with the result \eqref{parlierpetit} of Parlier and Petit  mentioned earlier it follows that
$5\le \chi(\mathbb{H}^2,d)\le 9$.
The full list of coordinates of the embedding of $G_{762}$, as well the incidence list are available at \cite{adjacencylist, coordinates}.

Our construction was dictated by condition \ref{choice} which lead to a relatively small value of $d$. Of course, other choices are possible, and it may very well be the case that for larger values of $d$ one may prove better lower bounds for $\chi(\mathbb{H}^2,d)$. It would also be of interest to find a value of $d$ for which a small (ideally, humanly verifiable) $5$-chromatic distance $d$ graph exists in $\mathbb{H}^2$.

We conclude with a very interesting question of Kahle \cite{kahle}: \emph{ Prove or disprove the existence of an universal upper bound for $\chi(\mathbb{H}^2, d)$ where $d>0$.}
\emph{}

\end{document}